\documentclass[11pt]{article}
\usepackage{graphicx}
\baselineskip=16pt

\usepackage{csquotes}
\usepackage{indentfirst}
\usepackage{tikz}
\usetikzlibrary{braids}
\usepackage{tikz-cd}

\topmargin= .5cm
\textheight= 20cm
\textwidth= 32cc
\baselineskip=16pt

\evensidemargin= .9cm
\oddsidemargin= .9cm

\usepackage{amssymb,amsthm,amsmath}
\usepackage{xcolor,paralist,hyperref,fancyhdr,etoolbox}

\theoremstyle{plain}
\newtheorem{theorem}{Theorem}[]
\newtheorem{proposition}{Proposition}[]

\theoremstyle{definition}
\newtheorem{definition}{Definition}[subsection]
\newtheorem{example}{Example}[subsection]
\newtheorem{remark}{Remark}[subsection]

\hypersetup{ colorlinks=true, linkcolor=black, filecolor=black, urlcolor=black }

\begin{document}
\title{Derived Braids of Decreasing Products and their Categories} 
\author{Christopher Tapo}
\date{\vspace{-5ex}}

\maketitle


\begin{abstract}
Derived braids have been used to classify categorical structures based on the braid underlying a braided monoidal category \(\mathcal{V}\). With four-strand braids underlying the composition morphisms of tensor products of categories enriched over \(\mathcal{V}\), equality of derived braids has been seen to correspond with the results of Joyal and Street on coherence for braided monoidal categories, namely that diagrams commute when the braids underlying the legs of the diagram are equal. Equality of derived braids can then be seen as a generalization of the Yang-Baxter equation that appears in the work of Joyal and Street. The main result is a proof that decreasing products, which are braids formed from component braids with even numbers of strands, have equivalent derived braids. Plans for future work include interpreting what categorical structures correspond to decreasing products.
\end{abstract} 

\section{Introduction}
The work of Joyal and Street in \cite{JOYAL199320} shows a relationship between categorical structures and braids, particularly through braided monoidal categories and the Yang-Baxter equation. This relationship is studied further with generalizations of the Yang-Baxter equation as derived braids and their connection to categories enriched over braided monoidal categories as seen in the work of Stefan Forcey and Felita Humes in \cite{Forcey_2007}. A set of slides by Stefan Forcey \cite{DerivedBraids} focuses on derived braids and includes questions on what sorts of categorical structures might be possible when considering a braid \(x\) with any amount of strands that also satisfies the derived braid equality \(Lx=Rx\). Here we prove a conjecture appearing in the slides of Stefan Forcey on braids called "decreasing products" and show that they obey \(Lx=Rx\). We also state plans for future work on the categorical structures that can be interpreted from decreasing products.

We proceed as follows: First, we review derived braids and how they generalize the Yang-Baxter equation. Next, we define decreasing products and their component braids. Third, we prove that the component braids for decreasing products satisfy the derived braid equality \(Lx=Rx\). Next, we introduce the notions of combing decreasing products and embedding braids which are used to show that entire decreasing products also satisfy \(Lx=Rx\). Finally, we state plans for future work on the categorical interpretations for decreasing products and their derived braids.

\section{Derived Braids}
We shall consider braids with n strands in the braid group \(B_n\). The notion of k-ribbons is introduced in order to simplify the graphical presentation of braids with many strands. We then define left and right derived braids and show that the Yang-Baxter equation arises as the equality of left and right derived braids, or \(Lx=Rx\), for the single crossing of two strands. We end the section with some additional examples of derived braids, including ones that obey \(Lx=Rx\) and others that do not.

\subsection{k-ribbons}
\begin{definition} A \emph{k-ribbon} is a collection of \(k\) strands that are parallel and adjacent throughout a braid for \(k \in \mathbb{N}\). 
\end{definition}
A braid can be graphically reduced by representing k parallel strands with k-ribbons, which are represented by single strands with a the value \(k\) printed above the strand. This can be illustrated with the following example:
\\
\begin{center}
\scalebox{0.70}{

\begin{tikzpicture}
\pic[
braid/.cd,
number of strands=7,
ultra thick,
gap=0.1,
control factor=0,
nudge factor=0,
name prefix=braid,
] at (2,0) {braid={ a_4, a_3-a_5, a_2-a_4, a_3}};

\node [scale=1.5] at (9,-2) {$\cong$};

\pic[
braid/number of strands=4,
ultra thick,
braid/gap=0.1,
braid/control factor=0,
braid/nudge factor=0,
name prefix=braid,
] at (10,-1.5) {braid={ a_2}};

\node[circle, draw=none, fill=white, inner sep=0pt, scale=1.25] at (10,-1) {\(1\)};
\node[circle, draw=none, fill=white, inner sep=0pt, scale=1.25] at (11,-1) {\(3\)};
\node[circle, draw=none, fill=white, inner sep=0pt, scale=1.25] at (12,-1) {\(2\)};
\node[circle, draw=none, fill=white, inner sep=0pt, scale=1.25] at (13,-1) {\(1\)};

\end{tikzpicture}

}

\end{center}
All braids from this point will be pictured in the reduced form if not already reduced, unless otherwise noted.

\subsection{Derived Braids and the Yang-Baxter equation}
We restrict our attention to the left and right derived braids as discussed in Forcey's slides \cite{DerivedBraids}, but other types of derived braids appear in \cite{Forcey_2007}. We then present various examples of derived braids and show whether they satisfy \(Lx=Rx\).

\begin{definition}
Given a braid \(x \in B_{2n}\) for \(n \in \mathbb{N}\), we define the \emph{left derived braid} \(Lx\) as the braid in \(B_{3n}\) formed from \(x\) by attaching n identity strands on the right side of \(x\), followed by pairing off strands at the bottom of \(x\) into \(n\) many 2-ribbons resulting in \(2n\) strands (considering 2-ribbons as strands), and then braiding the remaining \(2n\) strands according to the braid \(x\). 
\end{definition}
An example illustrates this process as follows:
\\
\begin{center}
\scalebox{0.70}{

\begin{tikzpicture}
\pic[
braid/.cd,
number of strands=4,
ultra thick,
gap=0.1,
control factor=0,
nudge factor=0,
name prefix=braid,
] at (2,-1) {braid={ a_2}};

\node [scale=1.5] at (0,-1.75) {For $x=$};
\node [scale=1.5] at (7,-1.75) {, $Lx=$};

\pic[
braid/number of strands=6,
ultra thick,
braid/strand 1/.style={green},
braid/strand 2/.style={cyan},
braid/strand 3/.style={green},
braid/strand 4/.style={cyan},
braid/gap=0.1,
braid/control factor=0,
braid/nudge factor=0,
name prefix=braid,
] at (9,0) {braid={ a_2, a_4, a_3}};

\path [draw=black, dashed] (8.75,-1.25) -- (14.25,-1.25);

\end{tikzpicture}

}

\end{center}
The dashed line separates the original braid \(x\) from the braiding described in the definition of left derived braid. Also, these braids are already in a graphically reduced form without k-ribbons, however strands for \(x\) are colored to help show the pairing of strands into 2-ribbons, which aren't 2-ribbons graphically as they do not remain parallel in \(x\). 

A \emph{right derived braid} \(Rx\) is defined similarly, except with n identity strands being adjoined on the left.We now include some examples of braids and show whether they obey the equality of left and right derived braids, \(Lx=Rx\).

\begin{example}
\hfill 
\begin{center}
\scalebox{0.70}{

\begin{tikzpicture}
\pic[
braid/.cd,
number of strands=6,
ultra thick,
gap=0.1,
control factor=0,
nudge factor=0,
name prefix=braid,
] at (1.5,0) {braid={ a_2-a_4}};

\node [scale=1.5] at (0,-0.75) {For $x=$};
\node [scale=1.5] at (10,-0.75) {, $Lx\neq Rx$ is shown as};

\end{tikzpicture}

}

\end{center}
\begin{center}
\scalebox{0.70}{

\begin{tikzpicture}
\pic[
braid/number of strands=9,
ultra thick,
braid/gap=0.1,
braid/control factor=0,
braid/nudge factor=0,
name prefix=braid,
] at (2,0) {braid={ a_2-a_4, a_4-a_7, a_3-a_5, a_4}};

\node [scale=1.5] at (11,-2) {$\neq$};

\pic[
braid/number of strands=9,
ultra thick,
braid/gap=0.1,
braid/control factor=0,
braid/nudge factor=0,
name prefix=braid,
] at (12,0) {braid={ a_5-a_7, a_2-a_5, a_4-a_6, a_5}};

\path [draw=black, dashed] (11.75,-1.25) -- (20.25,-1.25);
\path [draw=black, dashed] (1.75,-1.25) -- (10.25,-1.25);

\end{tikzpicture}

}

\end{center}
\begin{center}
\scalebox{0.70}{

\begin{tikzpicture}
\pic[
braid/.cd,
number of strands=4,
ultra thick,
gap=0.1,
control factor=0,
nudge factor=0,
name prefix=braid,
] at (1.5,0) {braid={ a_2, a_1-a_3, a_2, a_2, a_1-a_3}};

\node [scale=1.5] at (0,-2.75) {For $x=$};
\node [scale=1.5] at (8,-2.75) {, $Lx=Rx$ is shown as};

\end{tikzpicture}

}

\end{center}
\begin{center}
\scalebox{0.70}{

\begin{tikzpicture}
\pic[
braid/number of strands=6,
ultra thick,
braid/gap=0.1,
braid/control factor=0,
braid/nudge factor=0,
name prefix=braid,
] at (2,0) {braid={ a_2, a_1-a_3, a_2, a_2, a_1-a_3, a_4, a_3-a_5, a_2-a_4, a_1-a_3, a_2, a_2, a_3, a_1-a_4, a_2-a_5}};

\node [scale=1.5] at (9.5,-7) {$=$};

\pic[
braid/number of strands=6,
ultra thick,
braid/gap=0.1,
braid/control factor=0,
braid/nudge factor=0,
name prefix=braid,
] at (12,0) {braid={ a_4, a_3-a_5, a_4, a_4, a_3-a_5, a_2, a_1-a_3, a_2-a_4, a_3-a_5, a_4, a_4, a_3, a_2-a_5, a_1-a_4}};

\path [draw=black, dashed] (11.75,-5.25) -- (17.25,-5.25);
\path [draw=black, dashed] (1.75,-5.25) -- (7.25,-5.25);

\end{tikzpicture}

}

\end{center}
\begin{center}
\scalebox{0.70}{

\begin{tikzpicture}
\pic[
braid/.cd,
number of strands=2,
ultra thick,
gap=0.1,
control factor=0,
nudge factor=0,
name prefix=braid,
] at (1.5,-2) {braid={ a_1}};

\node [scale=1.5] at (0,-2.75) {For $x=$};
\node [scale=1.5] at (6,-2.75) {, $Lx=Rx$ is shown as};

\end{tikzpicture}

}

\end{center}
\begin{center}
\scalebox{0.70}{

\begin{tikzpicture}
\pic[
braid/number of strands=3,
ultra thick,
braid/gap=0.1,
braid/control factor=0,
braid/nudge factor=0,
name prefix=braid,
] at (2,0) {braid={ a_1, a_2, a_1}};

\node [scale=1.5] at (5,-2) {$=$};

\pic[
braid/number of strands=3,
ultra thick,
braid/gap=0.1,
braid/control factor=0,
braid/nudge factor=0,
name prefix=braid,
] at (6,0) {braid={ a_2, a_1, a_2}};

\path [draw=black, dashed] (5.75,-1.25) -- (8.25,-1.25);
\path [draw=black, dashed] (1.75,-1.25) -- (4.25,-1.25);

\end{tikzpicture}

}

\end{center}
\end{example}
The last example is the \emph{Yang-Baxter equation} which can now be seen as the equality of left and right derived braids of the two-stranded braid with a single crossing.

\section{Decreasing Products}
Here we define decreasing products and their component braids, followed by a proof that decreasing product components satisfy \(Lx=Rx\). We then introduce the method of combing decreasing products and the notion of embedded braids which will be used to prove our main result.

\subsection{Decreasing Product Components}
In order to define decreasing products, we need to first introduce their component braids. \emph{Decreasing product components} are the braids \(b_0, b_1,...,b_k \in B_{2n}\) for \(k=\frac{n}{2}\) if \(n\) is even and \(k=\frac{n-1}{2}\) if \(n\) is odd, shown as follows:
\\
\begin{center}
\scalebox{0.70}{

\begin{tikzpicture}
\pic[
braid/.cd,
number of strands=2,
ultra thick,
gap=0.1,
control factor=0,
nudge factor=0,
name prefix=braid,
] at (1.5,-2) {braid={ a_1}};

\pic[
braid/.cd,
number of strands=4,
ultra thick,
gap=0.1,
control factor=0,
nudge factor=0,
name prefix=braid,
] at (5.5,-2) {braid={ a_2}};

\pic[
braid/.cd,
number of strands=4,
ultra thick,
gap=0.1,
control factor=0,
nudge factor=0,
name prefix=braid,
] at (11.5,-2) {braid={ a_2}};

\pic[
braid/.cd,
number of strands=4,
ultra thick,
gap=0.1,
control factor=0,
nudge factor=0,
name prefix=braid,
] at (2.5,-5) {braid={ a_2}};

\pic[
braid/.cd,
number of strands=4,
ultra thick,
gap=0.1,
control factor=0,
nudge factor=0,
name prefix=braid,
] at (10.5,-5) {braid={ a_2}};

\node [scale=1.5] at (0,-2.75) {$b_0=$};
\node [scale=1.5] at (4,-2.75) {, $b_1=$};
\node [scale=1.5] at (10,-2.75) {, $b_2=$};
\node [scale=1.5] at (0,-5.75) { $b_{\frac{n}{2}}$ ($n$ even) =};
\node [scale=1.5] at (8,-5.75) {, $b_{\frac{n-1}{2}}$ ($n$ odd) =};

\node[circle, draw=none, fill=white, inner sep=0pt, scale=1.25] at (1.5,-1.5) {\(n\)};
\node[circle, draw=none, fill=white, inner sep=0pt, scale=1.25] at (2.5,-1.5) {\(n\)};

\node[circle, draw=none, fill=white, inner sep=0pt, scale=1.25] at (5.5,-1.5) {\(1\)};
\node[circle, draw=none, fill=white, inner sep=0pt, scale=1] at (6.5,-1.5) {\(n-1\)};
\node[circle, draw=none, fill=white, inner sep=0pt, scale=1] at (7.5,-1.5) {\(n-1\)};
\node[circle, draw=none, fill=white, inner sep=0pt, scale=1.25] at (8.5,-1.5) {\(1\)};

\node[circle, draw=none, fill=white, inner sep=0pt, scale=1.25] at (11.5,-1.5) {\(2\)};
\node[circle, draw=none, fill=white, inner sep=0pt, scale=1] at (12.5,-1.5) {\(n-2\)};
\node[circle, draw=none, fill=white, inner sep=0pt, scale=1] at (13.5,-1.5) {\(n-2\)};
\node[circle, draw=none, fill=white, inner sep=0pt, scale=1.25] at (14.5,-1.5) {\(2\)};

\node[circle, draw=none, fill=white, inner sep=0pt, scale=1.25] at (2.5,-4.5) {\(\frac{n}{2}\)};
\node[circle, draw=none, fill=white, inner sep=0pt, scale=1.25] at (3.5,-4.5) {\(\frac{n}{2}\)};
\node[circle, draw=none, fill=white, inner sep=0pt, scale=1.25] at (4.5,-4.5) {\(\frac{n}{2}\)};
\node[circle, draw=none, fill=white, inner sep=0pt, scale=1.25] at (5.5,-4.5) {\(\frac{n}{2}\)};

\node[circle, draw=none, fill=white, inner sep=0pt, scale=1.25] at (10.5,-4.5) {\(\frac{n-1}{2}\)};
\node[circle, draw=none, fill=white, inner sep=0pt, scale=1.25] at (11.5,-4.5) {\(\frac{n+1}{2}\)};
\node[circle, draw=none, fill=white, inner sep=0pt, scale=1.25] at (12.5,-4.5) {\(\frac{n+1}{2}\)};
\node[circle, draw=none, fill=white, inner sep=0pt, scale=1.25] at (13.5,-4.5) {\(\frac{n-1}{2}\)};

\end{tikzpicture}

}

\end{center}
\begin{definition}
\emph{Decreasing products} are defined as the combinations of decreasing product components \(b_{i_1},b_{i_2},...b_{i_k}\) where \(i_j < i_l\) for \(0\le j<l\le k\).
\end{definition}

To prove that decreasing products obey \(Lx=Rx\), we start by showing that decreasing product components obey \(Lx=Rx\).

\begin{proposition}
Given a decreasing product component \(b_k \in B_{2n}\), the equality of derived braids \(L(b_k)=R(b_k)\) is satisfied.
\end{proposition}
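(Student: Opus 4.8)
The plan is to reduce the statement to a comparison of permutations by exploiting a standard fact about positive braids: a positive braid in which no two strands cross more than once is the \emph{permutation braid} (simple element) attached to its underlying permutation, and two such positive words represent the same element of $B_m$ as soon as their permutations agree. Equivalently, reading off the crossings of such a braid yields a reduced word for its permutation, and all reduced words for a given permutation are connected by the braid relations, which are exactly the defining relations of $B_m$. Accordingly, I would prove $L(b_k)=R(b_k)$ by establishing two things: (i) both $L(b_k)$ and $R(b_k)$ are permutation braids in $B_{3n}$, and (ii) they induce the same permutation of the $3n$ strands.

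First I would record the shape of a component: writing $b_k$ with ribbon widths $(k,n-k,n-k,k)$, it fixes the outer $k$ strands on each side and crosses the two middle blocks of $n-k$ strands as a single block transposition, so within $b_k$ every pair of strands crosses at most once and $b_k$ is itself a permutation braid. Next, for step (i), I would split each derived braid into its two layers: a \emph{top layer}, the copy of $b_k$ acting on the $2n$ unit strands with the $n$ adjoined identity strands left idle, and a \emph{bottom layer}, which applies $b_k$ to the $2n$ objects obtained by pairing the bottom of the top layer into $n$ two-ribbons and keeping the $n$ adjoined strands single. The organizing observation is that in the bottom layer the block transposition of $b_k$ moves the block of $n-k$ two-ribbons past the block of $n-k$ single strands (and only these): in $L(b_k)$ the ribbons sweep across the singles, in $R(b_k)$ the singles sweep across the ribbons, but in both cases every bottom-layer crossing is between a ribbon strand, coming from the original copy of $b_k$, and an adjoined single strand. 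Since every top-layer crossing is between two strands of the original copy, the crossing pairs of the two layers are disjoint, and within each layer a rigid block transposition crosses each relevant pair exactly once (ribbons stay parallel internally and do not cross one another, and the single strands do not cross one another). Hence no pair of strands crosses more than once in $L(b_k)$, nor in $R(b_k)$, so both are permutation braids.

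Finally, for step (ii) I would compute the two permutations by composing the top-layer permutation with the width-weighted bottom-layer block transposition, tracking the two strands inside each ribbon. The left--right symmetry of the construction is what makes this tractable: reflecting the diagram of $L(b_k)$ across a vertical axis turns the recipe of $L$ (append on the right, objects ordered ribbons-then-singles) into the recipe of $R$ (append on the left, singles-then-ribbons), and because the width profile $(k,n-k,n-k,k)$ is palindromic this reflection fixes $b_k$. Thus $R(b_k)$ is the mirror image of $L(b_k)$, and on permutations this reads as $\mathrm{perm}(R(b_k)) = w_0\,\mathrm{perm}(L(b_k))\,w_0$ with $w_0$ the reversal $p\mapsto 3n+1-p$. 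It then remains to verify that $\mathrm{perm}(L(b_k))$ is invariant under this reversal, i.e.\ that it is palindromic, which I would check directly from the explicit permutation; in the base cases both $L$ and $R$ return the same interleaving permutation of the $3n$ strands, confirming the pattern.

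I expect step (ii) to be the main obstacle, since the pairing of strands into ribbons is scrambled by the top layer and the differing ribbon widths make the bottom-layer strand positions bookkeeping-heavy; tracking precisely where the two strands of each ribbon land after the block crossing is the delicate point, and the reflection symmetry is exactly what keeps the computation honest. Step (i), by contrast, is a clean consequence of the disjointness of the top- and bottom-layer crossing pairs, and it is what lets the whole argument rest on a permutation comparison rather than on an explicit sequence of braid moves.
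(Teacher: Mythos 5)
Your proposal is correct in outline but follows a genuinely different route from the paper. The paper argues by case analysis on $k$ (namely $k=0$, $0<k<\tfrac{n}{2}$, and $k=\tfrac{n}{2}$), in each case exhibiting an explicit equality of positive ribbon-braid words for $L(b_k)$ and $R(b_k)$ and justifying it diagrammatically as an (iterated) Yang--Baxter move on ribbons of widths $p$, $q$ and $r=q-p$. You instead invoke the theory of permutation braids: both derived braids are positive words in which no pair of strands crosses twice, hence each is the simple element of its underlying permutation, and the equality reduces to a comparison of permutations. Your step (i) is sound, and the disjointness of the top-layer and bottom-layer crossing pairs is exactly the right observation. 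For step (ii) you can avoid the strand-tracing you are worried about: a permutation braid is determined by its set of crossing pairs (its inversion set), and enumerating these layer by layer gives, for both $L(b_k)$ and $R(b_k)$ with width profile $(k,n-k,n-k,k)$, the identical set --- all pairs $(i,j)$ with $i\in[k+1,n]$ and $j\in[n+1,2n-k]$, together with all pairs $(i,j)$ with $i\in[k+1,n]\cup[n+k+1,2n]$ and $j\in[2n+1,3n-k]$ --- so the reflection-and-palindromicity detour is unnecessary. If you do keep the symmetry argument, note that a literal planar reflection across a vertical axis inverts crossings; what you want is the flip automorphism $\sigma_i\mapsto\sigma_{3n-i}$, which preserves positivity and conjugates the underlying permutation by $w_0$. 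Your approach buys a uniform, case-free and fully rigorous proof; the paper's buys explicit ribbon identities that are reused later when combing general decreasing products.
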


\begin{proof}
\\
\\
\emph{Case 1. k=0}
\\
\\
\indent \(L(b_k)=R(b_k)\) follows from the Yang-Baxter equation on n-ribbons:
\\
\begin{center}
\scalebox{0.70}{

\begin{tikzpicture}
\pic[
braid/number of strands=3,
ultra thick,
braid/gap=0.1,
braid/control factor=0,
braid/nudge factor=0,
name prefix=braid,
] at (2,0) {braid={ a_1, a_2, a_1}};

\node [scale=1.5] at (5,-2) {$=$};

\pic[
braid/number of strands=3,
ultra thick,
braid/gap=0.1,
braid/control factor=0,
braid/nudge factor=0,
name prefix=braid,
] at (6,0) {braid={ a_2, a_1, a_2}};

\path [draw=black, dashed] (5.75,-1.25) -- (8.25,-1.25);
\path [draw=black, dashed] (1.75,-1.25) -- (4.25,-1.25);

\node[circle, draw=none, fill=white, inner sep=0pt, scale=1.25] at (2,0.5) {\(n\)};
\node[circle, draw=none, fill=white, inner sep=0pt, scale=1.25] at (3,0.5) {\(n\)};
\node[circle, draw=none, fill=white, inner sep=0pt, scale=1.25] at (4,0.5) {\(n\)};

\node[circle, draw=none, fill=white, inner sep=0pt, scale=1.25] at (6,0.5) {\(n\)};
\node[circle, draw=none, fill=white, inner sep=0pt, scale=1.25] at (7,0.5) {\(n\)};
\node[circle, draw=none, fill=white, inner sep=0pt, scale=1.25] at (8,0.5) {\(n\)};

\end{tikzpicture}

}

\end{center}
\emph{Case 2. $0<k\le\frac{n-1}{2}$ if n is odd or $0<k<\frac{n}{2}$ if n is even}
\\
\\
\indent Here the braid \(b_k\) is of the form:
\begin{center}
\scalebox{0.70}{

\begin{tikzpicture}
\pic[
braid/number of strands=4,
ultra thick,
braid/gap=0.1,
braid/control factor=0,
braid/nudge factor=0,
name prefix=braid,
] at (2,0) {braid={ a_2}};

\node[circle, draw=none, fill=white, inner sep=0pt, scale=1.25] at (2,0.5) {\(p\)};
\node[circle, draw=none, fill=white, inner sep=0pt, scale=1.25] at (3,0.5) {\(q\)};
\node[circle, draw=none, fill=white, inner sep=0pt, scale=1.25] at (4,0.5) {\(q\)};
\node[circle, draw=none, fill=white, inner sep=0pt, scale=1.25] at (5,0.5) {\(p\)};

\end{tikzpicture}

}

\end{center}
with \(p<q\).
\\
\\
\indent \(L(b_k)=R(b_k)\) follows from the following braid equality:
\\
\begin{center}
\scalebox{0.70}{

\begin{tikzpicture}
\pic[
braid/number of strands=7,
ultra thick,
braid/gap=0.1,
braid/control factor=0,
braid/nudge factor=0,
name prefix=braid,
] at (2,0) {braid={ a_2, a_3, a_5, a_4, a_3}};

\node [scale=1.5] at (9,-3) {$=$};

\pic[
braid/number of strands=7,
ultra thick,
braid/gap=0.1,
braid/control factor=0,
braid/nudge factor=0,
name prefix=braid,
] at (10,0) {braid={ a_5, a_4, a_2, a_3, a_4}};

\path [draw=black, dashed] (1.75,-2.25) -- (8.25,-2.25);
\path [draw=black, dashed] (9.75,-2.25) -- (16.25,-2.25);

\node[circle, draw=none, fill=white, inner sep=0pt, scale=1.25] at (2,0.5) {\(p\)};
\node[circle, draw=none, fill=white, inner sep=0pt, scale=1.25] at (3,0.5) {\(q\)};
\node[circle, draw=none, fill=white, inner sep=0pt, scale=1.25] at (4,0.5) {\(p\)};
\node[circle, draw=none, fill=white, inner sep=0pt, scale=1.25] at (5,0.5) {\(r\)};
\node[circle, draw=none, fill=white, inner sep=0pt, scale=1.25] at (6,0.5) {\(p\)};
\node[circle, draw=none, fill=white, inner sep=0pt, scale=1.25] at (7,0.5) {\(q\)};
\node[circle, draw=none, fill=white, inner sep=0pt, scale=1.25] at (8,0.5) {\(p\)};

\node[circle, draw=none, fill=white, inner sep=0pt, scale=1.25] at (10,0.5) {\(p\)};
\node[circle, draw=none, fill=white, inner sep=0pt, scale=1.25] at (11,0.5) {\(q\)};
\node[circle, draw=none, fill=white, inner sep=0pt, scale=1.25] at (12,0.5) {\(p\)};
\node[circle, draw=none, fill=white, inner sep=0pt, scale=1.25] at (13,0.5) {\(r\)};
\node[circle, draw=none, fill=white, inner sep=0pt, scale=1.25] at (14,0.5) {\(p\)};
\node[circle, draw=none, fill=white, inner sep=0pt, scale=1.25] at (15,0.5) {\(q\)};
\node[circle, draw=none, fill=white, inner sep=0pt, scale=1.25] at (16,0.5) {\(p\)};

\end{tikzpicture}

}

\end{center}
Where \(q-p=r\).
\\
\\
\emph{Case 3. k=$\frac{n}{2}$}
\\
\\
\indent \(L(b_k)=R(b_k)\) follows from the following braid equality:
\begin{center}
\scalebox{0.70}{

\begin{tikzpicture}
\pic[
braid/number of strands=6,
ultra thick,
braid/gap=0.1,
braid/control factor=0,
braid/nudge factor=0,
name prefix=braid,
] at (2,0) {braid={ a_2, a_4, a_3}};

\node [scale=1.5] at (8,-1.75) {$=$};

\pic[
braid/number of strands=6,
ultra thick,
braid/gap=0.1,
braid/control factor=0,
braid/nudge factor=0,
name prefix=braid,
] at (9,0) {braid={ a_4, a_2, a_3}};

\path [draw=black, dashed, thick] (1.75,-1.25) -- (7.25,-1.25);
\path [draw=black, dashed, thick] (8.75,-1.25) -- (14.25,-1.25);

\node[circle, draw=none, fill=white, inner sep=0pt, scale=1.25] at (2,0.5) {\(k\)};
\node[circle, draw=none, fill=white, inner sep=0pt, scale=1.25] at (3,0.5) {\(k\)};
\node[circle, draw=none, fill=white, inner sep=0pt, scale=1.25] at (4,0.5) {\(k\)};
\node[circle, draw=none, fill=white, inner sep=0pt, scale=1.25] at (5,0.5) {\(k\)};
\node[circle, draw=none, fill=white, inner sep=0pt, scale=1.25] at (6,0.5) {\(k\)};
\node[circle, draw=none, fill=white, inner sep=0pt, scale=1.25] at (7,0.5) {\(k\)};

\node[circle, draw=none, fill=white, inner sep=0pt, scale=1.25] at (9,0.5) {\(k\)};
\node[circle, draw=none, fill=white, inner sep=0pt, scale=1.25] at (10,0.5) {\(k\)};
\node[circle, draw=none, fill=white, inner sep=0pt, scale=1.25] at (11,0.5) {\(k\)};
\node[circle, draw=none, fill=white, inner sep=0pt, scale=1.25] at (12,0.5) {\(k\)};
\node[circle, draw=none, fill=white, inner sep=0pt, scale=1.25] at (13,0.5) {\(k\)};
\node[circle, draw=none, fill=white, inner sep=0pt, scale=1.25] at (14,0.5) {\(k\)};

\end{tikzpicture}

}

\end{center}

\end{proof}

Before proving our main result, we will introduce the notions of combing and embedding braids. We now include some examples of decreasing products, each of which obeys \(Lx=Rx\). 

\begin{example}
\hfill
\begin{center}
\scalebox{0.70}{

\begin{tikzpicture}
\pic[
braid/.cd,
number of strands=4,
ultra thick,
gap=0.1,
control factor=0,
nudge factor=0,
name prefix=braid,
] at (2.5,-1) {braid={ a_2}};

\node [scale=1.5] at (0,-1.75) {For $b_2$ ($n=5$) =};
\node [scale=1.5] at (9,-1.75) {, $Lx=Rx$ is shown as};

\node[circle, draw=none, fill=white, inner sep=0pt, scale=1.25] at (2.5,-0.5) {\(2\)};
\node[circle, draw=none, fill=white, inner sep=0pt, scale=1.25] at (3.5,-0.5) {\(3\)};
\node[circle, draw=none, fill=white, inner sep=0pt, scale=1.25] at (4.5,-0.5) {\(3\)};
\node[circle, draw=none, fill=white, inner sep=0pt, scale=1.25] at (5.5,-0.5) {\(2\)};

\end{tikzpicture}

}

\end{center}
\begin{center}
\scalebox{0.70}{

\begin{tikzpicture}
\pic[
braid/number of strands=7,
ultra thick,
braid/gap=0.1,
braid/control factor=0,
braid/nudge factor=0,
name prefix=braid,
] at (2,0) {braid={ a_2, a_3, a_5, a_4, a_3}};

\node [scale=1.5] at (9,-3) {$=$};

\pic[
braid/number of strands=7,
ultra thick,
braid/gap=0.1,
braid/control factor=0,
braid/nudge factor=0,
name prefix=braid,
] at (10,0) {braid={ a_5, a_4, a_2, a_3, a_4}};

\path [draw=black, dashed] (9.75,-2.25) -- (16.25,-2.25);
\path [draw=black, dashed] (1.75,-2.25) -- (8.25,-2.25);

\node[circle, draw=none, fill=white, inner sep=0pt, scale=1.25] at (2,0.5) {\(2\)};
\node[circle, draw=none, fill=white, inner sep=0pt, scale=1.25] at (3,0.5) {\(3\)};
\node[circle, draw=none, fill=white, inner sep=0pt, scale=1.25] at (4,0.5) {\(2\)};
\node[circle, draw=none, fill=white, inner sep=0pt, scale=1.25] at (5,0.5) {\(1\)};
\node[circle, draw=none, fill=white, inner sep=0pt, scale=1.25] at (6,0.5) {\(2\)};
\node[circle, draw=none, fill=white, inner sep=0pt, scale=1.25] at (7,0.5) {\(3\)};
\node[circle, draw=none, fill=white, inner sep=0pt, scale=1.25] at (8,0.5) {\(2\)};

\node[circle, draw=none, fill=white, inner sep=0pt, scale=1.25] at (10,0.5) {\(2\)};
\node[circle, draw=none, fill=white, inner sep=0pt, scale=1.25] at (11,0.5) {\(3\)};
\node[circle, draw=none, fill=white, inner sep=0pt, scale=1.25] at (12,0.5) {\(2\)};
\node[circle, draw=none, fill=white, inner sep=0pt, scale=1.25] at (13,0.5) {\(1\)};
\node[circle, draw=none, fill=white, inner sep=0pt, scale=1.25] at (14,0.5) {\(2\)};
\node[circle, draw=none, fill=white, inner sep=0pt, scale=1.25] at (15,0.5) {\(3\)};
\node[circle, draw=none, fill=white, inner sep=0pt, scale=1.25] at (16,0.5) {\(2\)};

\end{tikzpicture}

}

\end{center}
\begin{center}
\scalebox{0.70}{

\begin{tikzpicture}
\pic[
braid/.cd,
number of strands=6,
ultra thick,
gap=0.1,
control factor=0,
nudge factor=0,
name prefix=braid,
] at (3.5,-1) {braid={ a_3, a_2-a_4, a_3, a_3}};

\node [scale=1.5] at (0,-1.75) {For $b_1$$b_2$ ($n=5$) =};
\node [scale=1.5] at (12,-1.75) {, $Lx=Rx$ is shown as};

\node[circle, draw=none, fill=white, inner sep=0pt, scale=1.25] at (3.5,-0.5) {\(1\)};
\node[circle, draw=none, fill=white, inner sep=0pt, scale=1.25] at (4.5,-0.5) {\(3\)};
\node[circle, draw=none, fill=white, inner sep=0pt, scale=1.25] at (5.5,-0.5) {\(1\)};
\node[circle, draw=none, fill=white, inner sep=0pt, scale=1.25] at (6.5,-0.5) {\(1\)};
\node[circle, draw=none, fill=white, inner sep=0pt, scale=1.25] at (7.5,-0.5) {\(3\)};
\node[circle, draw=none, fill=white, inner sep=0pt, scale=1.25] at (8.5,-0.5) {\(1\)};

\end{tikzpicture}

}

\end{center}
\begin{center}
\scalebox{0.70}{

\begin{tikzpicture}
\pic[
braid/number of strands=9,
ultra thick,
braid/gap=0.1,
braid/control factor=0,
braid/nudge factor=0,
name prefix=braid,
] at (2,0) {braid={ a_3, a_2-a_4, a_3, a_3, a_6, a_5-a_7, a_4-a_6, a_3-a_5, a_4, a_4, a_5}};

\node [scale=1.5] at (11,-6) {$=$};

\pic[
braid/number of strands=9,
ultra thick,
braid/gap=0.1,
braid/control factor=0,
braid/nudge factor=0,
name prefix=braid,
] at (12,0) {braid={ a_6, a_5-a_7, a_6, a_6, a_3, a_2-a_4, a_3-a_5, a_4-a_6, a_5, a_5, a_4}};

\path [draw=black, dashed] (11.75,-4.25) -- (20.25,-4.25);
\path [draw=black, dashed] (1.75,-4.25) -- (10.25,-4.25);

\node[circle, draw=none, fill=white, inner sep=0pt, scale=1.25] at (2,0.5) {\(1\)};
\node[circle, draw=none, fill=white, inner sep=0pt, scale=1.25] at (3,0.5) {\(3\)};
\node[circle, draw=none, fill=white, inner sep=0pt, scale=1.25] at (4,0.5) {\(1\)};
\node[circle, draw=none, fill=white, inner sep=0pt, scale=1.25] at (5,0.5) {\(1\)};
\node[circle, draw=none, fill=white, inner sep=0pt, scale=1.25] at (6,0.5) {\(3\)};
\node[circle, draw=none, fill=white, inner sep=0pt, scale=1.25] at (7,0.5) {\(1\)};
\node[circle, draw=none, fill=white, inner sep=0pt, scale=1.25] at (8,0.5) {\(1\)};
\node[circle, draw=none, fill=white, inner sep=0pt, scale=1.25] at (9,0.5) {\(3\)};
\node[circle, draw=none, fill=white, inner sep=0pt, scale=1.25] at (10,0.5) {\(1\)};

\node[circle, draw=none, fill=white, inner sep=0pt, scale=1.25] at (12,0.5) {\(1\)};
\node[circle, draw=none, fill=white, inner sep=0pt, scale=1.25] at (13,0.5) {\(3\)};
\node[circle, draw=none, fill=white, inner sep=0pt, scale=1.25] at (14,0.5) {\(1\)};
\node[circle, draw=none, fill=white, inner sep=0pt, scale=1.25] at (15,0.5) {\(1\)};
\node[circle, draw=none, fill=white, inner sep=0pt, scale=1.25] at (16,0.5) {\(3\)};
\node[circle, draw=none, fill=white, inner sep=0pt, scale=1.25] at (17,0.5) {\(1\)};
\node[circle, draw=none, fill=white, inner sep=0pt, scale=1.25] at (18,0.5) {\(1\)};
\node[circle, draw=none, fill=white, inner sep=0pt, scale=1.25] at (19,0.5) {\(3\)};
\node[circle, draw=none, fill=white, inner sep=0pt, scale=1.25] at (20,0.5) {\(1\)};

\end{tikzpicture}

}

\end{center}

\end{example}

\subsection{Combing Decreasing Products}
Combing braids is a form of braid isotopy yielding equivalent braids in the braid group, as seen in the work of Artin \cite{10362252-eeea-365a-ac20-0098364f4689}. To show that the left and derived braids for a decreasing product are equivalent, we use a method of combing the the derived braids into components that satisfy the equalities for decreasing product components seen in Proposition 1.

Consider a left derived braid \(Lx\) of a decreasing product \(x=b_{i_1}b_{i_2}...b_{i_k}\) as having sections corresponding to the decreasing product components \(b_{i_j}\) for  \(j \in 1,...,k\) as well as sections corresponding to the braiding from the derived braid based on those components, denoted \(b_{i_j}^*\). We can comb each \(b_{i_j}\) of \(Lx\) through sections so that it becomes an equivalent braid \(Lx^{comb}=b_{i_1}b_{i_1}^*b_{i_2}^{comb}b_{i_2}^*...b_{i_k}^{comb}b_{i_k}^*\) where each \(b_{i_j}\) with \(j>1\) is combed through sections transforming the braid component into a combed version denoted \(b_{i_j}^{comb}\). Combing for a right derived braid \(Rx\) is analogous. The process of combing is illustrated below, where the following braids are not shown in the reduced form using k-ribbons.

\begin{example}
\hfill

\begin{center}
\scalebox{0.60}{

\begin{tikzpicture}
\pic[
braid/number of strands=9,
ultra thick,
braid/gap=0.1,
braid/control factor=0,
braid/nudge factor=0,
name prefix=braid,
] at (2,0) {braid={ a_3, a_2-a_4, a_1-a_3-a_5, a_2-a_4, a_3, a_3, a_2-a_4, a_3, a_6, a_5-a_7, a_4-a_6-a_8, a_3-a_5-a_7, a_2-a_4-a_6, a_1-a_3-a_5, a_2-a_4, a_3, a_3, a_2-a_4, a_3-a_5, a_4-a_6, a_5}};

\node [scale=2] at (11.5,-11) {$\stackrel{\text{comb}}{\Rightarrow}$};

\pic[
braid/number of strands=9,
ultra thick,
braid/gap=0.1,
braid/control factor=0,
braid/nudge factor=0,
name prefix=braid,
] at (13,0) {braid={ a_3, a_2-a_4, a_1-a_3-a_5, a_2-a_4, a_3, a_6, a_5-a_7, a_4-a_6-a_8, a_3-a_5-a_7, a_2-a_4-a_6, a_1-a_3-a_5, a_2-a_4, a_3, a_6, a_5-a_7, a_6, a_3, a_2-a_4, a_3-a_5, a_4-a_6, a_5}};

\path [draw=black, dashed] (12.75,-5.25) -- (23,-5.25);
\path [draw=black, dashed] (0,-5.25) -- (10.25,-5.25);

\path [draw=black, dashed] (12.75,-13.25) -- (23,-13.25);
\path [draw=black, dashed] (0,-8.25) -- (10.25,-8.25);

\path [draw=black, dashed] (12.75,-16.25) -- (23,-16.25);
\path [draw=black, dashed] (0,-16.25) -- (10.25,-16.25);

\node[circle, draw=none, fill=white, inner sep=0pt, scale=2] at (0.5,-2.5) {\(b_0\)};
\node[circle, draw=none, fill=white, inner sep=0pt, scale=2] at (22.25,-2.5) {\(b_0\)};
\node[circle, draw=none, fill=white, inner sep=0pt, scale=2] at (0.5,-6.75) {\(b_1\)};
\node[circle, draw=none, fill=white, inner sep=0pt, scale=2] at (22.25,-8.5) {\(b_0^*\)};
\node[circle, draw=none, fill=white, inner sep=0pt, scale=2] at (0.5,-12) {\(b_0^*\)};
\node[circle, draw=none, fill=white, inner sep=0pt, scale=2] at (22.25,-14.75) {\(b_1^{comb}\)};
\node[circle, draw=none, fill=white, inner sep=0pt, scale=2] at (0.5,-19) {\(b_1^*\)};
\node[circle, draw=none, fill=white, inner sep=0pt, scale=2] at (22.25,-19) {\(b_1^*\)};

\end{tikzpicture}

}

\end{center}

\end{example}
It is important to note that the combination \(b_1^{comb}b_1^*\) in \(Lx^{comb}\) as pictured above is equal to the right derived braid \(R(b_1)\) over \(b_1 \in B_9\). This is not true in general for combinations of combed components, however, these combinations are equivalent to a left or right derived braid of a decreasing product component with either less or equal strands embedded into \(3n\) strands. This can be checked using the cases in the proof of Proposition 1, while the notion of embedding braids is detailed in the following section.

\subsection{Embedded Braids}
We shall consider braids in \(B_m\) \emph{embedded} in \(B_n\) for \(m<n\) by attaching \(n-m\) many identity strands. Let \(b \in B_m\) embedded in \(B_n\) be denoted \(b^m \in B_n\). More decorations could specify the exact manner in which the identity strands are attached, but this is not needed for proving our main result and thus omitted to simplify notation. 

\begin{remark}
The braid appearing in the second case of Proposition 1 and in the first example of Example 3.1.1. can be rewritten using braid isotopy to reveal a Yang-Baxter equation as an embedded braid:

\begin{center}
\scalebox{0.70}{

\begin{tikzpicture}
\pic[
braid/.cd,
number of strands=7,
ultra thick,
gap=0.1,
control factor=0,
nudge factor=0,
name prefix=braid,
] at (2,0) {braid={ a_2-a_5 | a_4 | a_3 | a_4}};

\node [scale=1.5] at (9,-2) {$=$};

\pic[
braid/number of strands=7,
ultra thick,
braid/gap=0.1,
braid/control factor=0,
braid/nudge factor=0,
name prefix=braid,
] at (10,0) {braid={ a_2-a_5 | a_3 | a_4 | a_3 }};

\path [draw=black, dashed, thick] (9.75,-4.25) -- (16.25,-4.25);
\path [draw=black, dashed, thick] (9.75,-1.25) -- (16.25,-1.25);
\path [draw=black, dashed, thick] (1.75,-4.25) -- (8.25,-4.25);
\path [draw=black, dashed, thick] (1.75,-1.25) -- (8.25,-1.25);

\node[circle, draw=none, fill=white, inner sep=0pt, scale=1.25] at (2,0.5) {\(p\)};
\node[circle, draw=none, fill=white, inner sep=0pt, scale=1.25] at (3,0.5) {\(q\)};
\node[circle, draw=none, fill=white, inner sep=0pt, scale=1.25] at (4,0.5) {\(p\)};
\node[circle, draw=none, fill=white, inner sep=0pt, scale=1.25] at (5,0.5) {\(r\)};
\node[circle, draw=none, fill=white, inner sep=0pt, scale=1.25] at (6,0.5) {\(p\)};
\node[circle, draw=none, fill=white, inner sep=0pt, scale=1.25] at (7,0.5) {\(q\)};
\node[circle, draw=none, fill=white, inner sep=0pt, scale=1.25] at (8,0.5) {\(p\)};

\node[circle, draw=none, fill=white, inner sep=0pt, scale=1.25] at (10,0.5) {\(p\)};
\node[circle, draw=none, fill=white, inner sep=0pt, scale=1.25] at (11,0.5) {\(q\)};
\node[circle, draw=none, fill=white, inner sep=0pt, scale=1.25] at (12,0.5) {\(p\)};
\node[circle, draw=none, fill=white, inner sep=0pt, scale=1.25] at (13,0.5) {\(r\)};
\node[circle, draw=none, fill=white, inner sep=0pt, scale=1.25] at (14,0.5) {\(p\)};
\node[circle, draw=none, fill=white, inner sep=0pt, scale=1.25] at (15,0.5) {\(q\)};
\node[circle, draw=none, fill=white, inner sep=0pt, scale=1.25] at (16,0.5) {\(p\)};

\end{tikzpicture}

}

\end{center}

\end{remark}

\subsection{Theorem for Decreasing Products}

We are now ready to prove the main theorem.

\begin{theorem}
Given a decreasing product \(x=b_{i_1}b_{i_2}...b_{i_k} \in B_{2n}\), the equality of derived braids \(Lx=Rx\) is satisfied.
\end{theorem}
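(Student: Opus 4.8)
The plan is to reduce the equality \(Lx=Rx\) for an arbitrary decreasing product to the single-component equalities of Proposition 1, using the combing procedure and the embedding of smaller components set up above. First I would record how the left derived braid acts on a product. Reading the definition of \(Lx\) as ``draw \(x\), pair the bottom strands into \(n\) many 2-ribbons, then braid those \(2n\) ribbons-and-strands according to \(x\),'' the word-by-word nature of the second stage lets the bottom braiding split factor by factor, so that for \(x=b_{i_1}b_{i_2}\cdots b_{i_k}\) one has
\[
Lx = b_{i_1} b_{i_2} \cdots b_{i_k}\, b_{i_1}^{*} b_{i_2}^{*} \cdots b_{i_k}^{*},
\]
where the top factor is literally the original product and the bottom factor is the ribbon version of the same word; the right derived braid \(Rx\) admits the parallel decomposition with the ribbon-braiding section positioned on the opposite side. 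This is exactly the decomposition of \(L(b_0b_1)\) exhibited in Example 3.2.1.

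Second, I would carry out the combing. Because the component \(b_p\) leaves the outermost \(p\) strands on each side fixed, a larger index corresponds to a strictly narrower, nested support, and this nesting is what permits each \(b_{i_j}\) with \(j\ge 2\) to be combed rightward through the derived sections \(b_{i_1}^{*},\ldots,b_{i_{j-1}}^{*}\) until it sits immediately before its own derived section \(b_{i_j}^{*}\). The result is the combed braid
\[
Lx^{comb} = b_{i_1} b_{i_1}^{*}\, b_{i_2}^{comb} b_{i_2}^{*} \cdots b_{i_k}^{comb} b_{i_k}^{*},
\]
as in the combing example; since combing is braid isotopy, \(Lx = Lx^{comb}\), and an analogous combing gives \(Rx = Rx^{comb}\).

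Third, I would identify each combed block. Following the Remark, each combination \(b_{i_j}^{comb} b_{i_j}^{*}\) is, up to isotopy, a derived braid of a single decreasing-product component, possibly one with fewer strands, embedded in \(B_{3n}\) by adjoining identity strands; which derived braid arises is read off from the ribbon sizes of the block and matched against the three cases of Proposition 1 (with the second case in its embedded Yang--Baxter form from the Remark). Concretely, the outermost block \(b_{i_1} b_{i_1}^{*}\) of \(Lx^{comb}\) is the left derived braid of its component, and each block \(b_{i_j}^{comb} b_{i_j}^{*}\) with \(j\ge 2\) is a right derived braid of an embedded component, exactly as noted for \(b_1^{comb}b_1^{*}=R(b_1)\) in the text; the blocks of \(Rx^{comb}\) are the mirror images, namely a right derived braid in the outermost position and left derived braids thereafter. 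Proposition 1 (whose local equalities are unaffected by the extra identity strands of an embedding) then equates the \(j\)-th block of \(Lx^{comb}\) with the \(j\)-th block of \(Rx^{comb}\), and concatenating the equal blocks yields \(Lx^{comb}=Rx^{comb}\), hence \(Lx=Rx\).

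The main obstacle is the combing step together with the block identification. One must verify that the nesting of supports genuinely permits every \(b_{i_j}\) to pass through all the earlier derived sections \(b_{i_l}^{*}\) with \(l<j\) for every choice of strictly increasing indices, so that the rearrangement is a true isotopy rather than one valid only in small examples; and one must confirm that after combing each block \(b_{i_j}^{comb}b_{i_j}^{*}\) always falls into one of the three cases of Proposition 1 once its ribbon sizes are counted. Tracking how those ribbon sizes \(p,q,r\) transform under embedding, and checking that they never produce a configuration outside the three cases, is where the real work lies; once that bookkeeping is in place, the final block-by-block application of Proposition 1 is immediate.
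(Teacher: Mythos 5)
Your proposal follows essentially the same route as the paper: decompose the derived braid of the product into the component sections and their derived sections, comb each later component down to sit beside its own derived section, identify each combed block as a derived braid of a single component embedded in \(B_{3n}\), and finish blockwise with Proposition 1. You also correctly isolate the same gap the paper leaves open, namely verifying that the combing isotopy works for arbitrary strictly increasing index sequences and that every combed block lands in one of the three cases of Proposition 1.

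There is, however, one concrete point where your block identification disagrees with the paper's. You assert that the outermost block of \(Lx^{comb}\) is a left derived braid and that \emph{every} subsequent block \(b_{i_j}^{comb}b_{i_j}^{*}\) with \(j\ge 2\) is a right derived braid of an embedded component. The paper's decomposition instead \emph{alternates}: \(L(b_{i_1})\,R(\cdot)^{m_2}\,L(\cdot)^{m_3}\,R(\cdot)^{m_4}\cdots\), with the handedness of the final factor depending on the parity of \(k\). The two agree for \(k=2\) (which is the only case illustrated in the combing example you cite), but diverge at \(k=3\): the component \(b_{i_3}\) must be combed through \emph{two} derived sections, \(b_{i_1}^{*}\) and \(b_{i_2}^{*}\), and each passage transports the relevant ribbons across a block of identity strands, flipping the side on which the unused strands sit relative to the component; after an even number of passages the block is again a \emph{left} derived braid. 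Because Proposition 1 gives \(L=R\) for every component regardless, this misidentification does not by itself break your final conclusion --- it suffices that each block be \emph{some} derived braid of an embedded component and that the corresponding block of \(Rx^{comb}\) be its mirror --- but it signals that the combing has not actually been traced through for more than two factors, which is precisely the bookkeeping you flag as the remaining work.
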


\begin{proof}
The derived braids of \(x\) are equivalent to an alternating combination of derived braids of decreasing product components embedded in \(B_{3n}\) through combing as follows:
\begin{center}

\(L(b_{i_1}b_{i_2}...b_{i_k})=L(b_{i_1})R(b_{i_2}^{comb})^{m_2}...L(b_{i_k}^{comb})^{m_k}\) for odd \(k\), or \(R(b_{i_k}^{comb})^{m_k}\) for even \(k\)  

\end{center}
\indent and
\begin{center}

\(R(b_{i_1}b_{i_2}...b_{i_k})=R(b_{i_1})L(b_{i_2}^{comb})^{m_2}...R(b_{i_k}^{comb})^{m_k}\) for odd \(k\), or \(L(b_{i_k}^{comb})^{m_k}\) for even \(k\) 

\end{center}
for \(i_j^{'} \le i_j\) and \(n \ge m_j \ge m_l\) for \(j<l\). Note that \(b_{i_1}\) stays at the top of the braid and is never combed, hence \(b_{i_1}\) corresponds with \(L(b_{i_1})\) and \(R(b_{i_1})\). Since \(L(b_{i_1})=R(b_{i_1})\) and \(L(b_{i_j}^{comb})^{m_j}=R(b_{i_j}^{comb})^{m_j}\) for all \(j\) due to Proposition 1, it follows that \(Lx=Rx\) for all \(x=b_{i_1}b_{i_2}...b_{i_k} \in B_{2n}\).

\end{proof}

\section{Future Work}
With the main result showing that decreasing products always obey \(Lx=Rx\), this is a starting point for future work on finding possible categorical structures related to decreasing products and their derived braids. Additional plans for future work include investigations of similar categorical structures, like tortile tensor categories described in \cite{SHUM199457}, to find other potential connections to derived braids. Regarding the relationship between derived braids and categorical enrichment, other braid constructions are also planned to be investigated alongside decreasing products in order to further generalize ideas on tensor products of categories enriched over a braided monoidal category \(\mathcal{V}\) as seen in \cite{Forcey_2007}. While considering derived braids as a generalization of the Yang-Baxter equation, it may be worthwhile to investigate possible applications to braided Frobenius algebras and topological quantum field theories, as well as various topics in knot theory.

\bibliographystyle{plain}
\bibliography{bibliography.bib}

\begin{thebibliography}{1}

\bibitem{10362252-eeea-365a-ac20-0098364f4689}
E.~Artin.
\newblock Theory of braids.
\newblock {\em Annals of Mathematics}, 48(1):101--126, 1947.

\bibitem{DerivedBraids}
Stefan Forcey.
\newblock derived braids (slides).
\newblock \url{https://sforcey.github.io/sf34/derbraid.htm}.
\newblock Accessed: 2023-08-04.

\bibitem{Forcey_2007}
Stefan Forcey and Felita Humes.
\newblock Classification of braids which give rise to interchange.
\newblock {\em Algebraic \& Geometric Topology}, 7(3):1233--1274, sep 2007.

\bibitem{JOYAL199320}
A.~Joyal and R.~Street.
\newblock Braided tensor categories.
\newblock {\em Advances in Mathematics}, 102(1):20--78, 1993.

\bibitem{kelly1982basic}
G.~M. Kelly.
\newblock Basic concepts of enriched category theory.
\newblock {\em London Mathematical Society Lecture Note Series}, 64, 1982.

\bibitem{CategoriesWorking}
S.~Mac Lane.
\newblock {\em Categories for the Working Mathematician 2nd. edition}.
\newblock Grad. Texts in Math. 5, Springer, 1998.

\bibitem{SHUM199457}
Mei~Chee Shum.
\newblock Tortile tensor categories.
\newblock {\em Journal of Pure and Applied Algebra}, 93(1):57--110, 1994.

\end{thebibliography}
\nocite{*}

\end{document}